\documentclass{article}
\def\dj{d\kern-0.4em\char"16\kern-0.1em}
\def\Dj{\mbox{\raise0.3ex\hbox{-}\kern-0.4em D}}

\usepackage{amsfonts}
\usepackage{amssymb}
\usepackage{amsthm}
\usepackage{amsmath}
\usepackage{newlfont}
\usepackage{graphicx}
\usepackage[all,arc]{xy}
\usepackage{epsfig}

\usepackage{multicol}

\newtheorem {proposition}{Proposition}[section]
\newtheorem {theorem}{Theorem}[section]
\newtheorem {lemma}{Lemma}[section]
\newtheorem {example}{Example}[section]

\newtheorem {conjecture}{Conjecture}

\newtheorem {corollary}{Corollary}[section]

\newcommand\imm{\operatorname{imm}}
\newcommand\rank{\operatorname{rank}}
\newcommand\Em{\operatorname{em}}
\newcommand\w{\overline{w}}

\newcommand\bn{\tbinom}

\textheight 21cm \textwidth 130mm
\author{{\Dj{}or\dj{}e Barali\' {c}}\\ {\small Mathematical Institute of Serbian Academy of Sciences and Arts}\\[-2mm] {\small Belgrade, Serbia}
\\[-2mm]{\small djbaralic@mi.sanu.ac.rs}
\and Vladimir Gruji\'{c}\\ {\small Faculty of Mathematcs}\\[-2mm] {\small University of Belgrade,
Serbia}
\\[-2mm]{\small vgrujic@matf.bg.ac.rs}}

\title{Quasitoric manifolds and Small covers over properly colored polytopes: Immersions and Embeddings}
\date{}
\begin{document}
\maketitle

\begin{abstract} We construct small covers and quasitoric
manifolds over $n$-dimensional simple polytopes which allow proper
colorings of facets with $n$ colors. We calculate Stiefel-Whitney
classes of these manifolds as obstructions to immersions and
embeddings into Euclidean spaces. The largest dimension required
for embedding is achieved in the case $n$ is a power of two.

\hspace{0 mm} \textbf{2010 Mathematics Subject Classification}:
    57N35,     52B12.

\hspace{0 mm} \textbf{Keywords}: immersions, quasitoric manifolds,
simple polytopes, colorings, Stiefel-Whitney classes.

\end{abstract}

\renewcommand{\thefootnote}{}
\footnotetext{This work was supported by Ministry of Education,
Science and Technological developments of Republic of Serbia
(Grants 174020 and 174034) and by Ministry of Science and
Technology of Republic of Serbska, Bosnia and Herzegovina (Grant
19/6-020/961-120/14).}

\section{Introduction}

\subsection{Colorings of simple polytopes}

An $n$-dimensional convex polytope is \textit{simple} if the
number of facets which are meeting at each vertex is equal to $n$.
The \textit{proper coloring} into $k$ colors of a simple polytope
$P^n$ is a map $$h: \{F_1, \dots, F_m\}\rightarrow
\{1,\ldots,k\}$$ of its set of facets such that $h (F_i)\neq h
(F_j)$ for each two intersecting facets. The \textit{chromatic
number} $\chi (P^n)$ of a simple polytope $P^{n}$ is the least $k$
for which there exists a proper coloring of $P^n$ into $k$ colors.
The chromatic numbers of the simplex $\Delta^n$, the cube $I^n$
and the permutahedron $\Pi^n$ (Figure \ref{bojenja}) are
$$\chi (\Delta^n)=n+1, \ \ \ \chi (I^n)=n, \ \ \ \chi
(\Pi^n)=n.$$

\begin{figure}[h!h!]
\centerline{\includegraphics[width=\textwidth]{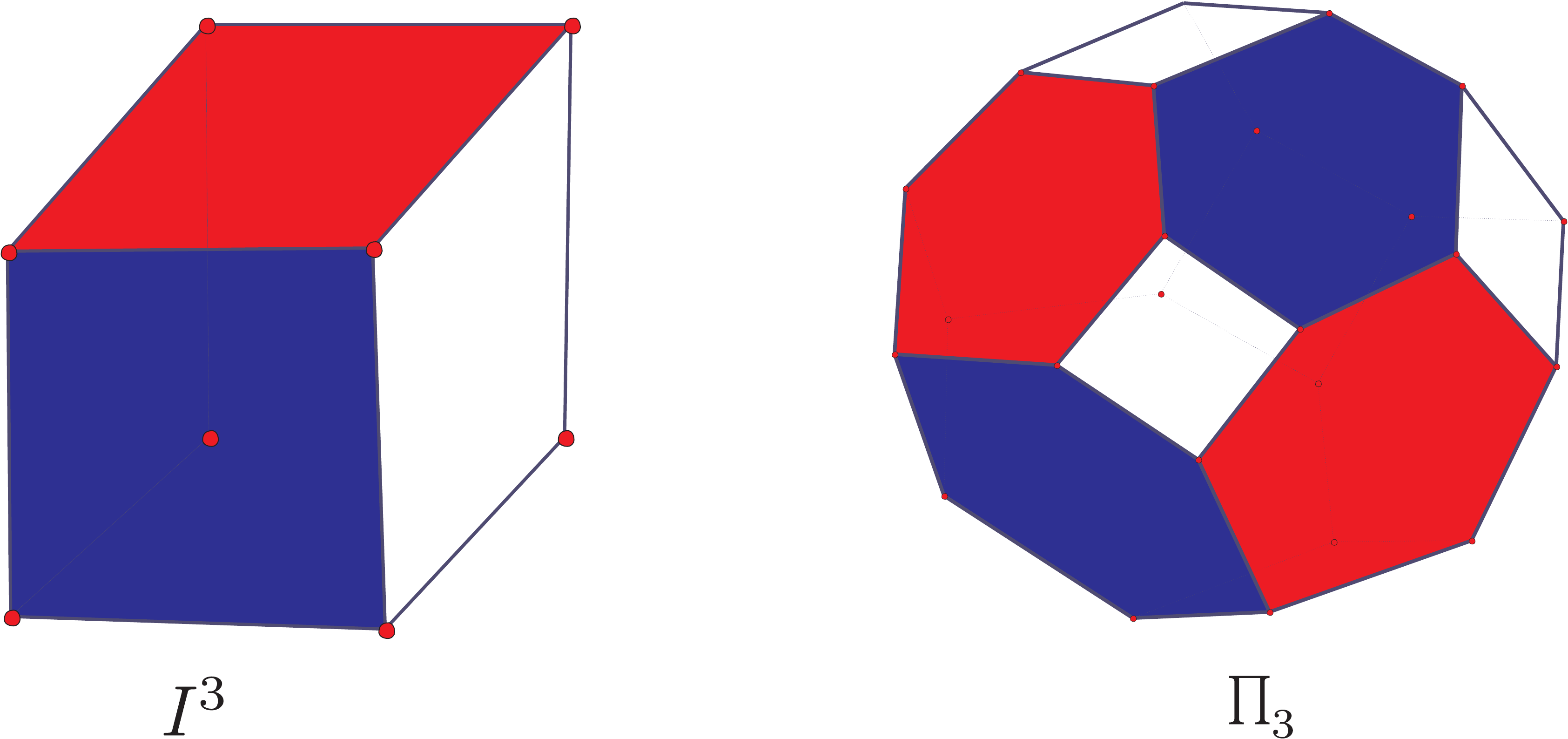}}
\caption{The colorings of the cube and the permutahedron}
\label{bojenja}
\end{figure}

Obviously, $\chi (P^n)\geq n$ for any simple polytope $P^n$. The
chromatic number of a polygon is clearly equal to $2$ or $3$,
depending on the parity of the number of its faces. By famous Four
Color Theorem chromatic numbers of $3$-dimensional simple
polytopes are equal to $3$ or $4$. In general case, for $n\geq 4$
it does not hold $\chi (P^n)\leq n+1$. Moreover, there are simple
polytopes such that their chromatic numbers are equal exactly to
numbers of their facets. The polars of the cyclic polytopes with
$m$ vertices $C^n (m)$ are examples of such type (see
\cite[Example~0.6]{5}).

We consider the class of $n$-dimensional simple polytopes with the
chromatic number equal to $n$. Denote this class by $\mathcal{C}$.
The class $\mathcal{C}$ is closed with respect to products (see
\cite[Construction~1.12]{2}) and connected sums (see
\cite[Construction~1.13]{2}). Also from any given simple polytope
$P^n$ by truncation over all its faces we obtain a simple polytope
$Q^n$ which belongs to the class $\mathcal{C}$. The complete
description of the class $\mathcal{C}$ is obtained in \cite{9}. A
simple $n$-polytope $P^n$ admits a proper coloring in $n$ colors
if and only if every its $2$-face has an even number of edges (see
\cite[Theorem 16]{9}).

\subsection{Small covers and quasitoric manifolds}

Quasitoric manifolds and they real analogues called small covers
are introduced by Davis and Januszkiewicz in \cite{3}. Their
geometric and algebraic topological properties are closely related
to combinatorics of simple polytopes. The following definitions
and notions are extensively elaborated in \cite{2}. Denote by
$$G_d=\left\{\begin{array}{rl}
S^{0}, & d=1,\\
S^{1},  & d=2,\end{array} \right. \ R_d=\left\{\begin{array}{rl}
\mathbb{Z}_2, & d=1,\\
\mathbb{Z},  & d=2,\end{array}\right. \
\mathbb{K}_d=\left\{\begin{array}{rl}
\mathbb{R}, & d=1,\\
\mathbb{C},  & d=2,\end{array} \right.$$ where $S^{0}=\{-1,+1\}$
and $S^{1}=\{z\mid |z|=1\}$ are multiplicative subgroups of real
and complex numbers, $\mathbb{Z}$ is the ring of integers and
$\mathbb{Z}_2=\{0,1\}$ is the ring of integers modulo $2$.

The group $G_d^{n}$ acts standardly on $\mathbb{K}_d^{n}$ by
$(t_1,\ldots,t_n)\cdot(x_1,\ldots,x_n)=(t_1x_1,\ldots,t_nx_n)$.
The action of $G_d^{n}$ on a smooth $dn$-dimensional manifold
$M^{dn}$ is locally standard if for any point of $M^{dn}$ there is
a $G_d^{n}$-invariant neighborhood which is weakly equivariantly
diffeomorphic to some open $G_d^{n}$-invariant subset of
$\mathbb{K}_d^{n}$ with the standard action of $G_d^{n}$. Recall
that two $G_d^{n}$-manifolds are weakly equivariantly
diffeomorphic if there is an automorphism $\omega:G_d^n\rightarrow
G_d^n$ and a diffeomorphism $f:M_1^{dn}\rightarrow M_2^{dn}$ such
that $f(g\cdot x)=\omega(g)\cdot f(x)$ for any $g\in G_d^n$ and
$x\in M_1^{dn}$. A smooth $dn$-dimensional $G_d^{n}$-manifold
$M^{dn}$ with a locally standard action of the group $G_d^{n}$
such that the orbit space $M^{dn}/G_d^{n}$ is diffeomorphic, as a
manifold with corners, to a simple $n$-dimensional polytope
$P^{n}$ is called {\it small cover} over $P^{n}$ if $d=1$,
correspondingly {\it quasitoric manifolds} over $P^{n}$ if $d=2$.
Let $\pi:M^{dn}\rightarrow P^{n}$ be the projection map and
$\{F_1,\ldots,F_m\}$ be the set of facets of the polytope $P^{n}$.
The inverse images $M^{d(n-1)}_j=\pi^{-1}(F_j)$ are
$G_d^{n}$-invariant submanifolds of codimension $d$ called {\it
characteristic submanifolds}. To each characteristic submanifold
$M_j^{d(n-1)}$ corresponds the isotropy subgroup
$G(F_j)=G_{\lambda_j}$ of the rank one, where
$G_{\lambda_j}=\{(1,\ldots,1),((-1)^{\lambda_{1j}},\ldots,(-1)^{\lambda_{nj}})\}$
for some
$\lambda_j=(\lambda_{1j},\ldots,\lambda_{nj})\in\mathbb{Z}_2^{n}\setminus\{0\}$
if $d=1$ and $G_{\lambda_j}=\{(e^{2\pi
i\lambda_{1j}t},\ldots,e^{2\pi i\lambda_{nj}t})\mid
t\in\mathbb{R}\}$ for some primitive vector
$\lambda_j=(\lambda_{1j},\ldots,\lambda_{nj})\in\mathbb{Z}^{n}$
defined up to the sign, if $d=2$. In this way, the action of the
group $G_d^{n}$ on $M^{dn}$ defines the {\it characteristic map}
$$l_d:\{F_1, \dots, F_m\}\rightarrow R^n_d,$$ which to a facet
$F_j$ of the polytope $P^{n}$ assigns a primitive vector
$\lambda_j=(\lambda_{1j},\ldots,\lambda_{nj})\in R^n_d$. Denote by
$\Lambda= (\lambda_1, \dots, \lambda_m)$ the matrix formed by
these vectors. Then $\det \Lambda_{{(V)}}=\pm 1$ for any vertex
$V\in P^{n}$, where $\Lambda_{(V)}$ is the submatrix of the matrix
$\Lambda$ formed by vectors corresponding to facets which are
intersecting in that vertex. A pair $(P^{n}, \Lambda)$ satisfying
this condition on submatrices is called a {\it characteristic
pair}. The manifold $M^{dn}$ is reconstructible from the
characteristic pair $(P^{n}, \Lambda)$ up to weak equivariant
diffeomorphism in the following way (see \cite{3} and
\cite[Construction~6.18]{2}). Let $F_{i_1}\cap\ldots\cap F_{i_k}$
be the minimal face containing a point $q\in P^{n}$. To the point
$q$ we associate the subgroup
$G(q)=G_{\lambda_{i_1}}\times\cdots\times G_{\lambda_{i_k}}$.
Define $M(P^{n}, \Lambda)=G_d^{n}\times P^{n}/\sim$, where
$(t_1,p)\sim(t_2,q)$ if and only if $p=q$ and $t_1t_2^{-1}\in
G(q)$.

Though not all simple polytopes allow characteristic maps (see
\cite[Example~1.15]{3}), to considered polytopes
$P^{n}\in\mathcal{C}$ in a simple way can be assigned a
characteristic matrix $\Lambda$ if each color $i\in\{1,\ldots,n\}$
is identified with the vector $e_i$ of the standard basis in
$R_d^{n}$.

The cohomology ring of a small cover and a quasitoric manifold
$M^{dn}, d=1,2$ is described in the following way. Let
$v_j=D[M_j^{d(n-1)}], j=1,\ldots,m$ be cohomology classes which
are Poincare duals to fundamental classes of characteristic
manifolds. The characteristic matrix $\Lambda=(\lambda_1, \dots,
\lambda_m)$ defines following linear forms
$$\theta_i:=\sum_{j=1}^m \lambda_{i j} v_j, i=1,\ldots,n,$$ where
$\lambda_j=(\lambda_{1 j}, \dots, \lambda_{n j})^t \in R_d^n,
j=1,\ldots,m$. Let $\mathcal{J}$ be the ideal in $R_d[v_1, \dots,
v_m]$ generated by elements $\theta_1,\ldots,\theta_n$ and
$\mathcal{I}$ be the Stanley-Reisner ideal of the polytope
$P^{n}$, which is generated by monomials $v_{i_1}\cdots v_{i_k}$
whenever $F_{i_1}\cap\ldots F_{i_k}=\emptyset$ in $P^{n},
i_1<\ldots<i_k$. Then for the cohomology ring the following
isomorphism holds (see \cite{3})

\begin{equation} \label{djf} H^\ast (M^{dn},R_d)\simeq R_d[v_1, \dots,
v_m]/(\mathcal{I}+\mathcal{J}).\end{equation} The same formula
$(\ref{djf})$ holds with $\mathbb{Z}_2$ coefficients if $d=2$. The
total Stiefel-Whitney class is determined by the following
Davis-Januszkiewicz's formula

\begin{equation}
\label{djfswc} w (M^{dn})= \prod_{i=1}^m (1+v_i)\in H^\ast
(M^{dn}; \mathbb{Z}_2),\end{equation} where in the case $d=2$ all
classes $v_i$ are regarded as $\mathbb{Z}_2$-restrictions of
corresponding integral classes.

\subsection{Immersions and Embeddings}

Immersions and embeddings of manifolds are a classical topic in
algebraic topology and differential topology. We consider
immersions and embeddings in the smooth category. For a manifold
$M^{n}$ define the numbers $imm (M^n)$ and $em(M^n)$ as the
smallest dimensions of Euclidean spaces in which this manifold is
immersed and embedded, correspondingly. In accordance with
Whitney's theorem for each smooth manifold $M^{n}$ is satisfied
$imm(M^n)\leq 2n-1$ and $em(M^n)\leq 2n$. On the other hand, the
dual Stiefel-Whitney classes $\overline{w}_i$ serve as
obstructions to immersions and embeddings of manifolds in
Euclidean spaces. Recall that dual Stiefel-Whitney classes
$\overline{w}_i(M^{n})$ are characteristic classes of the stable
normal bundle of a manifold $M^{n}$.

\begin{theorem}[see \cite{4}]\label{imeem} For a smooth manifold $M^{n}$ let $k:=\max \{i\mid
\w_i(M^n)\neq 0\}$. Then $\imm (M^n)\geq n+k \, \mbox{\,and\,}\,
\Em (M^n)\geq n+k+1.$
\end{theorem}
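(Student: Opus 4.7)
The plan is to turn each hypothesis about immersion/embedding into a constraint on the rank and top class of the (stable) normal bundle of $M^n$, and then to use the relation $w(\nu)=\w(M^n)$ to read off vanishings of the dual Stiefel-Whitney classes. Throughout I assume $M^n$ is closed, as is the case for the manifolds to which the theorem will be applied in this paper.

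For the immersion bound, I would pick an arbitrary immersion $f\colon M^n\looparrowright\mathbb{R}^{n+r}$, form its rank-$r$ normal bundle $\nu_f$, and use the splitting $TM\oplus\nu_f\cong\epsilon^{n+r}$ forced by the triviality of $T\mathbb{R}^{n+r}$. The Whitney product formula then yields $w(\nu_f)=\w(M^n)$, and since $\nu_f$ has rank $r$ its Stiefel-Whitney classes vanish in degrees $>r$. Hence $\w_i(M^n)=0$ for $i>r$, so $k\leq r$ and $\imm(M^n)\geq n+k$.

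For the embedding bound I note that an embedding is a fortiori an immersion, so the previous argument already produces $k\leq r$; the missing ``$+1$'' must come from an additional vanishing $\w_r(M^n)=0$. I plan to obtain this vanishing by a Thom class / Poincar\'e duality argument. Viewing the embedding as $M^n\hookrightarrow\mathbb{R}^{n+r}\subset S^{n+r}$ and using $0<n<n+r$, the fundamental class $[M]\in H_n(S^{n+r};\mathbb{Z}_2)=0$ vanishes, so its Poincar\'e dual in $H^r(S^{n+r};\mathbb{Z}_2)$ also vanishes. Since for a closed submanifold $\mathrm{PD}[M]$ restricts on $M$ to the mod-$2$ Euler class of the normal bundle $\nu_f$, this forces $w_r(\nu_f)=\w_r(M^n)=0$. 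Combined with the immersion vanishing, $\w_i(M^n)=0$ for all $i\geq r$, which forces $k<r$ and hence $\Em(M^n)\geq n+k+1$.

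The place needing the most care is the identification of $\mathrm{PD}[M]\big|_M$ with the top Stiefel-Whitney class of $\nu_f$; this rests on the Thom isomorphism for the tubular neighborhood of $M$ in $S^{n+r}$, and it is the essential ingredient responsible for the gap between the immersion and embedding bounds. Once that identification is granted, the remainder is routine: the Whitney sum formula together with the elementary observation that a rank-$r$ bundle has no nonzero Stiefel-Whitney classes in degree $>r$.
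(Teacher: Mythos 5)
Your proof is correct. The paper states this theorem as a citation to Milnor--Stasheff \cite{4} without giving its own proof, and what you have written is essentially the standard argument from that reference: the immersion bound is the Whitney-duality computation for the rank-$r$ normal bundle of an immersion, and the extra ``$+1$'' for embeddings is the corollary in Milnor--Stasheff (\S 11) that $\w_r$ vanishes for a codimension-$r$ embedding. Your only cosmetic variation is to obtain the vanishing of $w_r(\nu_f)$ from Poincar\'e duality in $S^{n+r}$ (via $H_n(S^{n+r};\mathbb{Z}_2)=0$) rather than by tracing the Thom class of the tubular neighborhood through $H^r(\mathbb{R}^{n+r},\mathbb{R}^{n+r}\setminus M)\to H^r(\mathbb{R}^{n+r})=0$; the two routes are equivalent, since the identity $i^*\mathrm{PD}[M]=w_r(\nu_f)$ you invoke is itself established by that same Thom-class diagram, and your standing assumptions ($M^n$ closed, $n\geq 1$, so $r\geq 1$) are exactly what make it apply.
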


The Stiefel-Whitney classes are also obstructions to so called
totally skew embeddings introduced by Ghomi and Tabachnikov in
\cite{6}. A manifold $M^{n}$ is totally skew embedded in the
Euclidean space $\mathbb{R}^{N}$ if any two tangent lines in
different points of $M^{n}$ are skew lines in $\mathbb{R}^{N}$. It
is proved in the paper \cite{6} that the number $N(M^{n})$ defined
as the smallest dimension of Euclidean spaces for which there is a
totally skew embedding of $M^{n}$, satisfies
\[2n+2\leq N(M^n)\leq 4n+1.\] The better lower bound is obtained
in \cite{7}.
\begin{theorem}[see \cite{7}, Proposition~1, Corollary~4]\label{skewteo}
If $$k=\max \{i\mid \overline{w}_i(M^n)\neq 0\}$$ then $N(M^n)\geq
2n+2k+1.$
\end{theorem}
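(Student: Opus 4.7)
The plan is to extract a geometric obstruction from a totally skew embedding by constructing a rank $2n+1$ vector bundle over the configuration space of $M$ whose Stiefel--Whitney classes encode the tangent data at pairs of points, and then to force its dual classes to vanish above the codimension of the embedding in $\mathbb{R}^N$.

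First, given a totally skew embedding $f\colon M^{n}\hookrightarrow\mathbb{R}^{N}$, I define over the ordered configuration space $F(M,2)=M\times M\setminus\Delta$ the bundle with fiber
$$\zeta_{(x,y)}=T_{x}M\oplus T_{y}M\oplus\mathbb{R}\cdot(f(y)-f(x)).$$
The skew hypothesis says precisely that this subspace of $\mathbb{R}^{N}$ always has the maximal possible dimension $2n+1$, so $\zeta$ is a genuine $(2n+1)$-dimensional subbundle of the trivial $N$-plane bundle $F(M,2)\times\mathbb{R}^{N}$. Its orthogonal complement $\nu$ has rank $N-2n-1$, and from $\zeta\oplus\nu\cong\epsilon^{N}$ I obtain $\w(\zeta)=w(\nu)$; in particular $\w_{i}(\zeta)=0$ whenever $i>N-2n-1$.

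Second, I use the Whitney splitting $\zeta\cong\pi_{1}^{*}TM\oplus\pi_{2}^{*}TM\oplus\lambda$, where $\pi_{j}\colon F(M,2)\to M$ are the projections and $\lambda$ is the line bundle spanned by the secant vector. The Whitney sum formula gives
$$\w(\zeta)=\pi_{1}^{*}\w(M)\cdot\pi_{2}^{*}\w(M)\cdot\w(\lambda).$$
Expanding this product in degree $2k$, one isolates the Künneth term $\pi_{1}^{*}\w_{k}(M)\cdot\pi_{2}^{*}\w_{k}(M)$, which is nonzero in $H^{2k}(M\times M;\mathbb{Z}_{2})$ by the hypothesis $\w_{k}(M)\neq 0$. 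Combined with the vanishing range from the first step, showing that the resulting class persists in $H^{*}(F(M,2);\mathbb{Z}_2)$ forces $N-2n-1\geq 2k$, yielding the bound $N\geq 2n+2k+1$.

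The main obstacle is the last implication: one has to verify that the leading Künneth contribution to $\w_{2k}(\zeta)$ is not cancelled by the remaining terms $\pi_{1}^{*}\w_{i}(M)\cdot\pi_{2}^{*}\w_{j}(M)\cdot\w(\lambda)^{2k-i-j}$, and that it is not killed by the restriction $H^{*}(M\times M)\to H^{*}(F(M,2))$ (whose kernel is the ideal supported on the diagonal). The natural framework is to pass to the unordered configuration space $B(M,2)=F(M,2)/\mathbb{Z}_{2}$, where $\lambda$ descends to the sign line bundle and the $\mathbb{Z}_{2}$-symmetry pairs the cross-terms; a Fadell--Neuwirth-type description of $H^{*}(B(M,2);\mathbb{Z}_{2})$, together with the fact that a pure tensor $\alpha\otimes\alpha$ cannot lie in the image of the diagonal Gysin map unless $\alpha$ has degree $\geq n$, then produces the required nonvanishing and completes the argument.
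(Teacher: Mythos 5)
The paper does not prove this theorem; it is imported verbatim from reference \cite{7} (Barali\'c--Prvulovi\'c--Stojanovi\'c--Vre\'cica--\v{Z}ivaljevi\'c, ``Topological obstructions to totally skew embeddings''), so there is no in-paper proof to compare against. What you are doing is reconstructing the argument of \cite{7}, and you have correctly identified its geometric core: the rank-$(2n+1)$ subbundle $\zeta_{(x,y)}=T_xM\oplus T_yM\oplus\mathbb{R}\cdot(f(y)-f(x))$ of the trivial $N$-plane bundle over the deleted product, the consequent vanishing of $\overline{w}_i(\zeta)$ for $i>N-2n-1$, and the Whitney-sum computation that relates $\overline{w}(\zeta)$ to $\overline{w}(M)\times\overline{w}(M)$. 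That is indeed the right framework.

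However, the argument as written has a genuine unresolved gap precisely where you flag it, and the fix you sketch is not yet a proof. Three concrete points. First, over the ordered configuration space $F(M,2)$ the secant line bundle $\lambda$ is trivialized by the nonvanishing section $(x,y)\mapsto f(y)-f(x)$, so $\overline{w}(\lambda)=1$ there; keeping a formal $\overline{w}(\lambda)$ in the Whitney product obscures the fact that the entire content has to come from $\pi_1^*\overline{w}(M)\cdot\pi_2^*\overline{w}(M)$. Second, the kernel of $H^*(M\times M;\mathbb{Z}_2)\to H^*(F(M,2);\mathbb{Z}_2)$ is the image of the diagonal Gysin map $\Delta_!$, which is zero in degrees $<n$; but in the intended application $2k$ can be as large as $2n-2$, which for $n\ge 2$ lands squarely in the range where $\Delta_!$ is nonzero, so the restriction is not automatically injective and the nonvanishing of $\overline{w}_k\times\overline{w}_k$ in $H^{2k}(M\times M)$ does not transfer for free. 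Third, the statement you invoke to close the gap --- that a pure tensor $\alpha\otimes\alpha$ cannot lie in the image of $\Delta_!$ unless $\deg\alpha\ge n$ --- is not what is needed (the relevant class is the full sum $\sum_{i+j=2k}\overline{w}_i\times\overline{w}_j$, not a single pure tensor) and is not obviously true as stated; moreover passing to the unordered configuration space $B(M,2)$ trades this problem for another, namely computing the Stiefel--Whitney classes of the bundle $(\pi_1^*TM\oplus\pi_2^*TM)/\mathbb{Z}_2$, which does not simply split. Making this last step rigorous is exactly the technical content of Proposition~1 and Corollary~4 of \cite{7}, and it requires an honest analysis (via the Gysin sequence of the diagonal, or equivalently the Thom class of $TM$ and Poincar\'e duality on $M\times M$) rather than a heuristic appeal to Fadell--Neuwirth. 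So: right construction, correctly identified obstacle, but the decisive nonvanishing argument is still missing.
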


The immersions and embeddings of quasitoric manifolds over cubes
are studied in \cite{1}.

In this paper we prove the following results.

\begin{theorem}\label{main} Let $n$ be a power of two and $P^n$ be a simple convex
$n$-dimensional polytope which allows a proper coloring in $n$
colors.
\begin{itemize}
\item[(1)] There is a small cover $M^{n}$ over the polytope
$P^{n}$ which satisfies following identities $imm(M^{n})=2n-1$ and
$em(M^{n})=2n$.

\item[(2)] There is a quasitoric manifold $M^{2n}$ over the
polytope $P^{n}$ which satisfies following inequalities
$imm(M^{2n})\geq 4n-2$ and $em(M^{2n})\geq 4n-1$. Moreover, for
$n\geq 3$ in both relation the equality holds.
\end{itemize}
\end{theorem}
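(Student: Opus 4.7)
The plan is to apply Theorem~\ref{imeem} in both parts, exhibiting small covers and quasitoric manifolds over $P^n$ whose top dual Stiefel--Whitney classes $\w_{n-1}$ (resp.\ $\w_{2n-2}$) do not vanish, and to match the resulting lower bounds against Whitney-type upper bounds.

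For Part~(1), a first temptation is to take the canonical characteristic map $\lambda_j=e_{h(F_j)}$ coming from the $n$-coloring. This yields no information: any two facets of the same color are disjoint, so the Stanley--Reisner relations force $v_iv_j=0$ within each color class $S_c$, and together with $\theta_c=\sum_{j\in S_c}v_j=0$ they collapse every color-block in \eqref{djfswc} to $1$, giving $w(M^n)=1$. I would therefore perturb $\Lambda$ on a carefully chosen set of facets by adding the all-ones vector $e_1+\cdots+e_n$ to selected $\lambda_j$'s. A cofactor expansion at each vertex (using that each vertex meets exactly one facet of each color) shows that the characteristic condition $\det\Lambda_{(V)}=\pm 1$ is preserved provided no two perturbed facets share a vertex. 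Regrouping \eqref{djfswc} by color and using the modified $\theta_c$'s collapses each color-block to a linear expression; choosing the perturbed set so that these expressions are all proportional to a single class $x\in H^1(M^n;\mathbb{Z}_2)$ gives
\[ w(M^n)=(1+x)^N \]
for an integer $N$ determined by the construction. Inverting via $(1+x)^{-N}=\sum_k\binom{N+k-1}{k}x^k$ mod $2$ yields $\w_{n-1}(M^n)=\binom{N+n-2}{n-1}x^{n-1}$; the hypothesis that $n$ is a power of two is exactly what Kummer's theorem needs to make this binomial coefficient odd, in the same spirit as the classical computation of $\w_{n-1}(\mathbb{R}P^n)$. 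One then verifies directly in the Stanley--Reisner quotient that $x^{n-1}\ne 0$ in $H^{n-1}(M^n;\mathbb{Z}_2)$. Theorem~\ref{imeem} delivers $\imm(M^n)\ge 2n-1$ and $\Em(M^n)\ge 2n$, and Whitney's inequalities close the gap.

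Part~(2) is parallel: the same perturbed $\Lambda$, now read integrally, defines a quasitoric $M^{2n}$; the computation of $w$ is unchanged apart from each $v_j$ now lying in degree~$2$, and the same Kummer argument yields $\w_{2n-2}(M^{2n})\ne 0$. Theorem~\ref{imeem} gives $\imm(M^{2n})\ge 4n-2$ and $\Em(M^{2n})\ge 4n-1$. The matching upper bounds for $n\ge 3$ lie one below the generic Whitney bounds and cannot come from characteristic-class obstructions; I expect to obtain them by exploiting the stably complex structure on $TM^{2n}$ that every quasitoric manifold carries, in the spirit of the cube-case analysis in \cite{1}.

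The main obstacle, in both parts, will be the combinatorial design of the perturbation: the set of perturbed facets must satisfy simultaneously (i) no two share a vertex of $P^n$ (preserving $\det\Lambda_{(V)}=\pm 1$), and (ii) the linear forms arising when the color-blocks of \eqref{djfswc} collapse are all proportional in $H^*(M;\mathbb{Z}_2)$, so that $w(M)$ reduces to a single-variable polynomial. Achieving both simultaneously for an arbitrary $P\in\mathcal C$ is the combinatorial heart of the proof, and the power-of-two hypothesis enters not only through the parity of the binomial coefficient but, most likely, also through this existence problem.
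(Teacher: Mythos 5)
Your central combinatorial device—adding the all‑ones vector $\mathbf{1}=e_1+\cdots+e_n$ to the coloring vectors of a set of facets no two of which share a vertex—does not give a valid characteristic matrix, so the construction breaks before the Stiefel--Whitney computation even starts. At any vertex $V$ containing exactly one perturbed facet, say of color $i$, the submatrix $\Lambda_{(V)}$ has columns $e_j$ for $j\neq i$ together with $\mathbf{1}+e_i$; their sum is $\sum_{j\neq i}e_j+(\mathbf{1}+e_i)=2\cdot\mathbf{1}=0$ over $\mathbb{Z}_2$ (and the integer determinant equals $2$), so $\det\Lambda_{(V)}=0\bmod 2$. Your claim that the nonsingularity condition is ``preserved provided no two perturbed facets share a vertex'' is thus exactly backwards: vertices meeting \emph{one} perturbed facet are the ones that fail.

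The paper's actual construction is almost orthogonal to your plan. It picks a \emph{single} vertex $V=H_1\cap\cdots\cap H_n$ with $h(H_i)=i$, assigns the ``tail'' vector $\lambda_{H_i}=e_i+e_{i+1}+\cdots+e_n$ to each of those $n$ facets (which all share $V$—the opposite of your disjointness requirement), and keeps $\lambda_F=e_{h(F)}$ elsewhere. Triangularity makes every $\det\Lambda_{(V')}=\pm1$. The resulting total class is \emph{not} $(1+x)^N$ for one class $x$; it is $w(M^{dn})=(1+t_1)\cdots(1+t_{n-1})$ with $t_i=u_1+\cdots+u_i$ distinct classes satisfying $t_1^2=0$, $t_i^2=t_{i-1}t_i$. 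The decisive step is not an $\mathbb{R}P^n$‑style inversion of a single power but the identity $\sigma^k_n\equiv\binom{n+k}{k}\pmod 2$, obtained by evaluating the square‑free polynomial expressions of $\overline{w}_{dk}$ at $t_i=1$ and inducting on $n$; for $n=2^r$ and $k=n-1$ this gives $\binom{2^{r+1}-1}{2^r-1}\equiv1$, so $\overline{w}_{d(n-1)}=t_1\cdots t_{n-1}\neq0$. Your proposed binomial $\binom{N+n-2}{n-1}$ is a different identity attached to a different (and, as above, nonexistent) cohomology ring.

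On the upper bounds in Part~(2): the paper does not invoke stably complex structures. The embedding $em(M^{2n})\leq 4n-1$ follows simply from orientability of quasitoric manifolds, and the immersion $imm(M^{2n})\leq 4n-2$ for $n\geq 3$ comes from Massey's theorem on normal vector fields after observing $\overline{w}_2(M^{2n})\cdot\overline{w}_{2n-2}(M^{2n})=0$ in the ring $\mathbb{Z}_2[t_1,\dots,t_n]/\mathcal{T}_n$. You would need to supply something of this kind; the Whitney/Hirsch bounds alone do not close the gap.
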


We give an explicit construction of manifolds $M^{n}$ and $M^{2n}$
from Theorem \ref{main}. By using combinatorial properties of the
simple polytope $P^n$ and Davis-Januszkiewicz formula (\ref{djf})
and (\ref{djfswc}), we describe the cohomology rings $H^*(M^{dn},
\mathbb{Z}_2), d=1,2$ and calculate Stiefel-Whitney classes
$w_k(M^{dn}), d=1,2$. When $n$ is a power of two, we prove that
classes $\w_{d(n-1)}( M^{dn}), d=1,2$ are nontrivial, the claim
that implies Theorem \ref{main}. Also from this claim and Theorem
\ref{skewteo} for totally skew embeddings of constructed manifolds
immediately follows

\begin{corollary} If $n$ is a power of two then
$$ N(M^{n})\geq 4n-1, \ \ \ N(M^{2n})\geq 8n-3.$$
\end{corollary}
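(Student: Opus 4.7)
The plan is to deduce both inequalities as a direct substitution into Theorem \ref{skewteo}, taking the nonvanishing of the top dual Stiefel-Whitney classes for granted. Specifically, the excerpt asserts that when $n$ is a power of two, the explicitly constructed small cover $M^{n}$ and quasitoric manifold $M^{2n}$ satisfy $\w_{d(n-1)}(M^{dn})\neq 0$ for $d=1,2$; this is the content used to prove Theorem \ref{main}, and it is the same input needed here.

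First, I would handle the small cover case. Since $\w_{n-1}(M^{n})\neq 0$, the integer
\[k=\max\{i\mid \w_i(M^{n})\neq 0\}\]
satisfies $k\geq n-1$. Applying Theorem \ref{skewteo} to the $n$-dimensional manifold $M^{n}$ yields
\[N(M^{n})\geq 2n+2k+1\geq 2n+2(n-1)+1=4n-1.\]

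Next, for the quasitoric case, $M^{2n}$ has dimension $2n$ and we have $\w_{2(n-1)}(M^{2n})\neq 0$, so the corresponding maximal index is at least $2(n-1)=2n-2$. Plugging the dimension $2n$ and $k\geq 2n-2$ into Theorem \ref{skewteo} gives
\[N(M^{2n})\geq 2(2n)+2(2n-2)+1=8n-3,\]
which is the second claimed bound.

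There is essentially no obstacle: the whole content is that both inequalities of Theorem \ref{skewteo} use the largest index of a nonvanishing dual Stiefel-Whitney class, and Theorem \ref{main} (via the nonvanishing claim stated just before the corollary) supplies the optimal such index in each case. The only point to check is that one uses $\dim M^{dn}=dn$ correctly in the formula $2\dim+2k+1$, which accounts for the asymmetric looking constants $4n-1$ and $8n-3$.
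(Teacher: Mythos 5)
Your proof is correct and follows exactly the route the paper intends: it invokes the nonvanishing of $\overline{w}_{d(n-1)}(M^{dn})$ (established in the proof of Theorem \ref{main}) and substitutes $k=d(n-1)$ into the bound $N(M^{dn})\geq 2\dim M^{dn}+2k+1$ from Theorem \ref{skewteo}, yielding $4n-1$ and $8n-3$ for $d=1,2$ respectively. The paper states this corollary as an immediate consequence without further elaboration, so your write-out is faithful to the paper's argument.
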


The obtained small covers $M^{n}$ are a new class of manifolds for
which the numbers $N(M^{n})$ are equal to $4n-1, 4n,$ or $4n+1$.
So far, only known examples with this property were real
projective spaces (see \cite{7}).

Cohen \cite{8} in 1985 resolved positively the famous {\em
Immersion Conjecture}, by showing that each compact smooth
$n$-manifold for $n>1$ can be immersed in
$\mathbb{R}^{2n-\alpha(n)}$, where $\alpha(n)$ is the number of
1's in the binary expansion of $n$. The products of real
projective spaces are examples of manifolds that achieve the upper
bounds. We construct new examples of this type in the class of
small covers.

\begin{theorem}\label{exist}
For every positive integer $n$ there is a small cover $M^{n}$ and
a quasitoric manifold $M^{2n}$ over some simple $n$-dimensional
polytope which allows a proper coloring in $n$ colors such that
\begin{eqnarray*}
imm(M^{n})=2n-\alpha(n), \ \ \ imm(M^{2n})\geq 4n-2\alpha(n) \\
em(M^{n})=2n-\alpha(n)+1, \ \ \ em(M^{2n})\geq 4n-2\alpha(n)+1, \\
N(M^{n})\geq 4n-2\alpha(n)+1, \ \ \ N(M^{n})\geq 8n-4\alpha(n)+1.
\end{eqnarray*}
\end{theorem}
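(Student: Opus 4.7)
The strategy is to reduce the general $n$ to the power-of-two case of Theorem \ref{main} via products. Write the binary expansion $n = 2^{k_1} + \cdots + 2^{k_r}$ with $r = \alpha(n)$ and distinct $k_i$. For each $i$, pick any simple polytope $P^{2^{k_i}} \in \mathcal{C}$, and let $M^{2^{k_i}}$, resp.\ $M^{2\cdot 2^{k_i}}$, be the small cover and quasitoric manifold over it supplied by Theorem \ref{main}. Set $P^n := \prod_i P^{2^{k_i}}$, $M^n := \prod_i M^{2^{k_i}}$, $M^{2n} := \prod_i M^{2\cdot 2^{k_i}}$. Since $\mathcal{C}$ is closed under products, $P^n \in \mathcal{C}$, and the product characteristic maps realize $M^n$ and $M^{2n}$ as a small cover and a quasitoric manifold over $P^n$.

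The crux is the nonvanishing of the top dual Stiefel--Whitney class of the product. Since the stable normal bundle of a product is the external Whitney sum of those of the factors, $\w(M^n) = \w(M^{2^{k_1}}) \times \cdots \times \w(M^{2^{k_r}})$, and similarly for $M^{2n}$. The excerpt above records that in the power-of-two case the proof of Theorem \ref{main} establishes $\w_{2^{k_i}-1}(M^{2^{k_i}}) \neq 0$ and $\w_{2(2^{k_i}-1)}(M^{2\cdot 2^{k_i}}) \neq 0$. The $\mathbb{Z}_2$-K\"unneth formula then makes the cross product
\[\w_{2^{k_1}-1}(M^{2^{k_1}}) \times \cdots \times \w_{2^{k_r}-1}(M^{2^{k_r}}) \in H^{n-\alpha(n)}(M^n;\mathbb{Z}_2)\]
a nonzero element of its tensor summand, so $\w_{n-\alpha(n)}(M^n) \neq 0$; analogously $\w_{2n-2\alpha(n)}(M^{2n}) \neq 0$. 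Feeding these into Theorem \ref{imeem} delivers the four inequalities $\imm(M^n) \geq 2n-\alpha(n)$, $\Em(M^n) \geq 2n-\alpha(n)+1$, $\imm(M^{2n}) \geq 4n-2\alpha(n)$, $\Em(M^{2n}) \geq 4n-2\alpha(n)+1$; feeding them into Theorem \ref{skewteo} delivers $N(M^n) \geq 4n-2\alpha(n)+1$ and $N(M^{2n}) \geq 8n-4\alpha(n)+1$.

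To upgrade the two small-cover inequalities to the claimed equalities, I would invoke Cohen's immersion theorem \cite{8} for the bound $\imm(M^n) \leq 2n-\alpha(n)$, and a Haefliger-type metastable embedding theorem (or an explicit construction exploiting the product structure and the $G_1^n$-symmetry of $M^n$) for the bound $\Em(M^n) \leq 2n-\alpha(n)+1$. I expect this embedding upper bound to be the main obstacle of the proof: the obvious product embedding $\prod_i M^{2^{k_i}} \hookrightarrow \mathbb{R}^{2n}$ furnished by Theorem \ref{main}(1) is off by $\alpha(n)-1$ ambient dimensions from the target $\mathbb{R}^{2n-\alpha(n)+1}$, and recovering those dimensions requires a genuine general-position argument that is subtler than a formal product construction.
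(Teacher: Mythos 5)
Your lower-bound argument is precisely the one in the paper: decompose $n$ according to its binary expansion, take products of the polytopes and manifolds from the power-of-two case, invoke multiplicativity of the (dual) Stiefel--Whitney class under products together with the $\mathbb{Z}_2$ K\"unneth isomorphism to get $\rank\,\w(M^{dn}) = \sum_i d(2^{k_i}-1) = d(n-\alpha(n))$, and then apply Theorems \ref{imeem} and \ref{skewteo}. One detail worth spelling out (which the paper also leaves implicit): in the product expansion of $\w(M^{dn})=\prod_i \w(M^{d2^{k_i}})$ the degree-$d(n-\alpha(n))$ part consists of a \emph{single} K\"unneth summand, namely $\prod_i \w_{d(2^{k_i}-1)}(M^{d2^{k_i}})$, because each factor vanishes above degree $d(2^{k_i}-1)$ and those top degrees already add up to $d(n-\alpha(n))$; this is why the nonzero cross product is the whole class and not merely one term of a possibly cancelling sum.

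Where you go beyond the paper is in your treatment of the upper bounds, and your caution is well placed. The immersion upper bound $\imm(M^n)\le 2n-\alpha(n)$ does follow from Cohen's theorem, which the paper cites in its introduction and silently relies on. But the embedding upper bound $\Em(M^n)\le 2n-\alpha(n)+1$ is nowhere established in the paper: its proof of Theorem \ref{exist} ends with the single sentence that the theorem ``is a consequence of Theorems \ref{imeem} and \ref{skewteo},'' both of which furnish only lower bounds, and no Haefliger-type theorem, explicit low-codimension embedding, or citation in that direction is given. Your observation that the obvious product embedding $\prod_i M^{2^{k_i}} \hookrightarrow \mathbb{R}^{2n}$ overshoots the target by $\alpha(n)-1$ ambient dimensions is exactly the unresolved issue. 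In short, you have faithfully reproduced the paper's argument for all six lower bounds and the immersion equality, and you have correctly flagged a genuine gap in the claimed embedding equality that the paper itself leaves open.
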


We conjecture that the statement of Theorem \ref{exist} is
satisfied for any simple $n$-dimensional polytope which is
properly colored by $n$ colors.

\begin{conjecture}\label{conj1}
Let $P^n$ be a simple convex $n$-dimensional polytope which allows
a proper coloring in $n$ colors. Then there exist a small cover
$M^{n}$ and a quasitoric manifold $M^{2n}$ over the polytope $P^n$
which satisfy

\begin{eqnarray*}
imm(M^{n})= 2n-\alpha(n),\;\;\;\;em(M^{n})= 2n-\alpha(n)+1,\\
imm(M^{2n})\geq  4n-2\alpha(n),\;\;\;\;em(M^{2n})\geq 4n-2
\alpha(n)+1.
\end{eqnarray*}
\end{conjecture}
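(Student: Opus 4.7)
The plan is to carry over the argument of Theorem~\ref{exist} to an arbitrary $P^n\in\mathcal{C}$ rather than to a specifically built polytope. First, for any $P^n\in\mathcal{C}$ with coloring $h:\{F_1,\ldots,F_m\}\to\{1,\ldots,n\}$, I would use the canonical characteristic map $l_d(F_j)=e_{h(F_j)}$. Since the $n$ facets meeting at every vertex $V$ carry $n$ distinct colors, the submatrix $\Lambda_{(V)}$ is a permutation matrix, so $(P^n,\Lambda)$ is a characteristic pair. This produces a small cover $M^n$ and a quasitoric manifold $M^{2n}$ whose $\mathbb{Z}_2$-cohomology is given by (\ref{djf}), with $\mathcal{J}$ generated by the $n$ linear forms $\theta_i=\sum_{h(F_j)=i}v_j$.

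Next, I would compute $\w(M^{dn})$ by inverting the Davis--Januszkiewicz formula (\ref{djfswc}): inside the cohomology ring $\w(M^{dn})=\prod_j(1+v_j)^{-1}$, where each factor is expanded as the finite sum $1+v_j+v_j^2+\cdots$ (the $v_j$ are nilpotent). The key combinatorial observation is that the relations $\theta_i\equiv 0$ identify the monochromatic blocks $\{v_j:h(F_j)=i\}$ by a single linear dependence, a direct analogue of the relation $x_0+\cdots+x_n\equiv 0$ that defines $H^*(\mathbb{RP}^n;\mathbb{Z}_2)$ in the case $P^n=\Delta^n$. This should let me organize the expansion of $\w_k$ color-by-color.

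The central claim I would aim at is $\w_{d(n-\alpha(n))}(M^{dn})\neq 0$. Once established, Theorem~\ref{imeem} gives $\imm(M^{dn})\geq d(2n-\alpha(n))$ and $\Em(M^{dn})\geq d(2n-\alpha(n))+1$, while Cohen's immersion theorem \cite{8} provides the matching upper bound in the small cover case $d=1$, yielding the equalities in the conjecture. For $d=2$ one obtains the stated inequalities for $\imm(M^{2n})$ and $\Em(M^{2n})$ directly.

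The hard part will be proving this nonvanishing uniformly over all $P^n\in\mathcal{C}$. In Theorem~\ref{exist} one is free to select a polytope for which the Stanley--Reisner relations simplify enough to expand $\w_{d(n-\alpha(n))}$ explicitly and exhibit a surviving top monomial; for arbitrary $P^n$ the Stanley--Reisner ideal depends sensitively on the whole face poset. A plausible route is an induction using the structural result \cite[Theorem~16]{9} characterizing $\mathcal{C}$ by evenness of all $2$-faces, together with the closure of $\mathcal{C}$ under products, connected sums and face truncations (\cite[Constructions~1.12--1.13]{2}). One would want, on the one hand, a base family of polytopes on which the nonvanishing is visible (e.g.\ those used in Theorem~\ref{exist}), and on the other hand a proof that the top nonzero dual Stiefel--Whitney class is preserved under these operations. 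Controlling how connected sum and truncation modify the Stanley--Reisner ideal, hence the concrete monomial representing $\w_{d(n-\alpha(n))}$, is the principal technical obstacle I expect.
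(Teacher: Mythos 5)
This statement is Conjecture~\ref{conj1}: the paper does not prove it, and the authors present it explicitly as an open problem beyond what Theorems~\ref{main} and~\ref{exist} establish. So there is no proof of the paper's to compare against, only the partial results covering $n=2^r$ and products of such polytopes.

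More seriously, your proposed starting point is a dead end. With the canonical characteristic map $l_d(F_j)=e_{h(F_j)}$ on all facets, the linear relation $\theta_i=\sum_{F\in\mathcal{F}_i}v_F=0$ together with the Stanley--Reisner relation $v_Fv_G=0$ for distinct facets of the same color (they are disjoint, the coloring being proper) gives, for every color $i$,
$$\prod_{F\in\mathcal{F}_i}(1+v_F)=1+\sum_{F}v_F+\sum_{F\neq G}v_Fv_G+\cdots=1,$$
since every elementary symmetric polynomial in the monochromatic $v_F$ vanishes in $H^*(M^{dn};\mathbb{Z}_2)$. Hence $w(M^{dn})=\prod_i\prod_{F\in\mathcal{F}_i}(1+v_F)=1$, so $\w(M^{dn})=1$ as well, and Theorem~\ref{imeem} yields only the trivial bounds $\imm\geq dn$, $\Em\geq dn+1$. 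This is precisely why the paper's Section~2.1 does \emph{not} use the canonical map: it fixes a vertex $V=H_1\cap\cdots\cap H_n$, assigns the upper-triangular vector $(0,\ldots,0,1,\ldots,1)^t$ to the distinguished facet $H_i$, and keeps $e_i$ only for the other facets of color $i$. That asymmetry is exactly what produces the nontrivial $w(M^{dn})=(1+t_1)\cdots(1+t_{n-1})$ of~(\ref{sw3}) and everything downstream. Your analogy to $\mathbb{RP}^n$ over $\Delta^n$ is also misplaced, since $\Delta^n\notin\mathcal{C}$ ($\chi(\Delta^n)=n+1$).

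Even after replacing your characteristic map with the paper's, the required nonvanishing $\w_{d(n-\alpha(n))}(M^{dn})\neq 0$ is only known for $n$ a power of two and for the particular product polytopes used in Theorem~\ref{exist}; proving it for an arbitrary $P^n\in\mathcal{C}$ is precisely the open content of the conjecture. You correctly flag this as the hard step, but the sketch offers no mechanism to close it. The auxiliary ingredients you cite (Cohen's theorem for the matching upper bound when $d=1$, Theorem~\ref{imeem} for the lower bound) are fine and are also how the paper proceeds; they are not where the difficulty lies.
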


\section{Manifolds $M^{dn}$}

\subsection{Construction}\label{cons}

Let $P^n$ be a simple polytope such that $\chi (P^n)=n$ and
$h:\{F_1, \dots,F_m\}\rightarrow\{1,\ldots,n\}$ be its proper
coloring. Denote by $\mathcal{F}_j$ the set $h^{-1} (j)$. Every
vertex $V\in P^n$ is the intersection of $n$ differently colored
facets. Take an arbitrary vertex $V=H_1\cap\cdots\cap H_n$, where
the facet $H_i$ is colored by color $i$. Assign to each facet
$H_i$ the vector
$\lambda_i=(\underbrace{0,\dots,0}_{i-1},\underbrace{1,\dots,1}_{n-i+1})^t$
and vectors $\lambda_F=(\underbrace{0,\dots,0}_{i-1}, 1,
\underbrace{0,\dots,0}_{n-i})^t$ to remaining facets $F\in
\mathcal{F}_i\backslash \{H_i\}$. The corresponding matrix
$\Lambda$ clearly induces the characteristic map, since $\det
{\Lambda}_{(v)}=1$ for every vertex $V\in P^n$. Define
$M^{dn}=M(P^{n}, \Lambda)$ as the manifold constructed from the
characteristic pair $(P^{n}, \Lambda)$.

\subsection{Cohomology ring}

Let $u_1,\dots,u_n$ be Poincar\'e duals to characteristic
submanifolds over the facets $H_1,\dots,H_n$ respectively. For
every facet $F$ of $P^n$ distinct from $H_1,\dots,H_n$, let $v_F$
denotes the Poincar\'e dual to the characteristic submanifold over
$F$.

The cohomology ring of the manifold $M^{dn}$ is determined by the
Stanley-Reisner ideal $\mathcal{I}$ of $P^n$ and the ideal
$\mathcal{J}$ which is generated by linear forms
\begin{equation} \label{j1}
\theta_i:=\sum_{j=1}^i u_j+\sum_{F\in
{\mathcal{F}_i\setminus\{H_i\}}} v_F, i=1, \dots, n.
\end{equation}
Recall that $$H^*(M^{dn}; \mathbb{Z}_2)=\mathbb{Z}_2[u_1, \dots,
u_n, v_F |F\in {\mathcal{F}\setminus\{H_1,\dots,
H_n\}}]/(\mathcal{I}+\mathcal{J}).$$

From the coloring of $P^n$ we easily deduce

\begin{proposition}\label{p1} For facets $F$, $G\in {\mathcal{F}_i\setminus\{H_i\}}$, $F\neq G$
corresponding classes satisfy the following relation in the
cohomology ring $H^*(M^{dn}; \mathbb{Z}_2)$
\[v_F v_G=u_i v_F=u_i v_G=0.\]
\end{proposition}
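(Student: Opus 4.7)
The plan is to derive all three vanishing relations directly from the Stanley--Reisner ideal $\mathcal{I}$ of $P^n$, using the defining property of a proper coloring. Since $F, G \in \mathcal{F}_i \setminus \{H_i\}$ and $H_i \in \mathcal{F}_i$, the three facets $F$, $G$, $H_i$ all carry color $i$ under $h$. By the definition of proper coloring, any two facets that intersect must receive different colors. Hence $F \cap G = \emptyset$, $F \cap H_i = \emptyset$, and $G \cap H_i = \emptyset$ as subsets of $P^n$.

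Recall that $\mathcal{I} \subset \mathbb{Z}_2[u_1,\dots,u_n,v_F]$ is generated by the squarefree monomials corresponding to empty intersections of facets; in particular, for any pair of non-intersecting facets the product of the two associated variables lies in $\mathcal{I}$. Applying this to the three pairs $(F,G)$, $(H_i,F)$, $(H_i,G)$, I get that $v_F v_G$, $u_i v_F$, $u_i v_G$ are all elements of $\mathcal{I}$, hence zero in the quotient ring $H^*(M^{dn};\mathbb{Z}_2) = \mathbb{Z}_2[u_1,\dots,u_n,v_F]/(\mathcal{I}+\mathcal{J})$. There is really no obstacle here: the content of the proposition is purely that the proper coloring hypothesis translates monochromatic pairs of facets into vanishing quadratic monomials, and the work is done by the definition of $\mathcal{I}$.
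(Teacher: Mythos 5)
Your proof is correct and is exactly the argument the paper has in mind: the paper states the proposition without proof, remarking only that it follows easily from the coloring. You have filled in that one-line gap properly, by noting that same-colored facets must be disjoint and that the Stanley--Reisner ideal $\mathcal{I}$ kills the corresponding quadratic monomials $v_F v_G$, $u_i v_F$, $u_i v_G$.
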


Proposition \ref{p1} and (\ref{j1}) together imply

\begin{proposition}\label{p2} The following equalities hold in $H^*(M^{dn};
\mathbb{Z}_2)$
\begin{equation}\label{r1}
u_1^2=0, \, u_2^2=u_1 u_2, \dots, u_n^2=u_1 u_n+\cdots+u_{n-1}
u_n.
\end{equation}
\end{proposition}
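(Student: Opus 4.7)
The plan is to use the linear relations $\theta_i = 0$ from the ideal $\mathcal{J}$ together with Proposition \ref{p1} in a direct calculation. From (\ref{j1}), setting $\theta_i=0$ in $H^*(M^{dn};\mathbb{Z}_2)$ and solving for $u_i$ gives
$$u_i = u_1 + u_2 + \cdots + u_{i-1} + \sum_{F\in\mathcal{F}_i\setminus\{H_i\}} v_F$$
for every $i=1,\ldots,n$. The idea is to multiply this identity by $u_i$ and then kill the mixed terms $u_i v_F$ using the relations supplied by Proposition \ref{p1}.

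Concretely, I would first handle the base case $i=1$. Since $\theta_1=0$ reads $u_1=\sum_{F\in\mathcal{F}_1\setminus\{H_1\}} v_F$, multiplying by $u_1$ gives
$$u_1^2 = \sum_{F\in\mathcal{F}_1\setminus\{H_1\}} u_1 v_F,$$
and each summand vanishes by Proposition \ref{p1}, so $u_1^2=0$. For general $i$, the same trick yields
$$u_i^2 = \sum_{j=1}^{i-1} u_j u_i + \sum_{F\in\mathcal{F}_i\setminus\{H_i\}} u_i v_F,$$
and again every term in the second sum vanishes by Proposition \ref{p1} because $F\in\mathcal{F}_i\setminus\{H_i\}$. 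Hence $u_i^2 = u_1 u_i + u_2 u_i + \cdots + u_{i-1} u_i$, which is precisely the claim.

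I do not anticipate a genuine obstacle here. The only point that requires any care is making sure the hypothesis of Proposition \ref{p1} is actually met: the $v_F$ appearing in the expression for $u_i$ are exactly those with $F\in\mathcal{F}_i\setminus\{H_i\}$, which is the case in which $u_i v_F = 0$. So the computation closes without any additional input, and the relations (\ref{r1}) follow at once.
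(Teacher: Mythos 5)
Your proof is correct and matches the paper's intended argument exactly: the paper states that Proposition \ref{p1} and equation (\ref{j1}) together imply Proposition \ref{p2} without writing out the details, and your computation --- solving $\theta_i=0$ for $u_i$, multiplying by $u_i$, and killing the terms $u_i v_F$ via Proposition \ref{p1} --- is precisely that implication spelled out.
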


Let $F$ be an arbitrary facet of $P^n$ and $v_F$ the corresponding
class over $F$, even if $F$ is one of $H_1$, $\dots$, $H_n$.

\begin{lemma}\label{lema1}
Let $k$ be a positive integer and $h$ a proper coloring map of the
polytope $P^n$. Then the class $v_F^k$ is either trivial or equal
to the degree $k$ homogenous polynomial
$Q_k^F(u_1,\ldots,u_{h(F)-1},v_F)$ whose monomials are square
free.
\end{lemma}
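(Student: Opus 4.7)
The proof runs by induction on $k$, with the engine being the base identity
\begin{equation*}
v_F^2 \;=\; v_F\bigl(u_1+u_2+\cdots+u_{h(F)-1}\bigr),
\end{equation*}
which I establish first. Writing $i:=h(F)$, there are two cases. If $F=H_i$ so that $v_F=u_i$, this identity is exactly the relation for $u_i^2$ in Proposition~\ref{p2}. If $F\in\mathcal{F}_i\setminus\{H_i\}$, I solve $\theta_i=0$ from (\ref{j1}) (modulo $2$) for $v_F$, multiply the resulting expression by $v_F$, and apply Proposition~\ref{p1} to annihilate the terms $v_F\cdot u_i$ and $v_F\cdot v_G$ for the remaining $G\in\mathcal{F}_i\setminus\{H_i,F\}$; what survives is exactly $v_F(u_1+\cdots+u_{i-1})$.

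With $Q_1^F=v_F$ as base case, suppose inductively that $v_F^k=Q_k^F(u_1,\dots,u_{i-1},v_F)$ is square-free of degree $k$. Since $v_F$ occurs at most to the first power in $Q_k^F$, write $Q_k^F=A+v_F R$ with $A,R\in\mathbb{Z}_2[u_1,\dots,u_{i-1}]$ square-free. Multiplying by $v_F$ and applying the base identity yields
\begin{equation*}
v_F^{k+1} \;=\; v_F\cdot\bigl(\,A+(u_1+\cdots+u_{i-1})R\,\bigr),
\end{equation*}
reducing the problem to showing that the inner polynomial $C:=A+(u_1+\cdots+u_{i-1})R\in\mathbb{Z}_2[u_1,\dots,u_{i-1}]$ can be rewritten as a square-free polynomial of degree $k$; this is nontrivial because whenever $u_j\mid R$, the product $u_jR$ produces a factor of $u_j^2$.

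The elimination of such squares is the main (and only) obstacle. I handle it by a secondary descent in the lex order on $\mathbb{Z}_2[u_1,\dots,u_{i-1}]$ with $u_{i-1}>\cdots>u_1$: each application of the rewriting rule $u_j^2\mapsto u_j(u_1+\cdots+u_{j-1})$ from Proposition~\ref{p2} leaves the exponents of $u_{j+1},\dots,u_{i-1}$ unchanged and strictly lowers the exponent of $u_j$, so the exponent vector strictly decreases at position $j$. Since there are only finitely many monomials of fixed total degree $k$, the procedure terminates at a square-free representative $S\in\mathbb{Z}_2[u_1,\dots,u_{i-1}]$ for $C$, after which $Q_{k+1}^F:=v_F\cdot S$ is a square-free polynomial of degree $k+1$ in $u_1,\dots,u_{i-1},v_F$, closing the induction. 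The ``trivial'' clause of the statement covers the degenerate case in which all coefficients collapse and $Q_k^F$ is the zero polynomial (as happens, for instance, already for $v_F^2$ when $h(F)=1$).
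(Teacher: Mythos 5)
Your proof is correct and follows essentially the same route as the paper's: in both cases the key identity $v_F^{k+1}=(u_1+\cdots+u_{h(F)-1})\,v_F^k$ is extracted by multiplying $\theta_{h(F)}=0$ through by a power of $v_F$ and killing terms via Proposition~\ref{p1}, after which the relations of Proposition~\ref{p2} are used to reduce to a square-free representative. You simply package the first step as the base identity $v_F^2=(u_1+\cdots+u_{h(F)-1})v_F$ and spell out the square-elimination a bit more explicitly (including the $F=H_i$ case), which the paper leaves terse, but the argument is the same.
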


\begin{proof} Let $h(F)=i$. We prove the claim by induction on $k$. For
$k=1$ it is trivial. Assume that $v_F^k=Q_k^F (u_1,\ldots,u_{i-1},
v_F)$. By multiplying with $v_F^k$ the relation
$$\sum_{j=1}^{i-1} u_j + \sum_{F\in
{\mathcal{F}_i\setminus\{H_i\}}} v_{F}=0$$ and by using
Proposition \ref{p1} we get
\[v_F^{k+1}=(u_1+\cdots+u_{i-1}) Q_k^F (u_1, \dots, u_{i-1},
v_F).\] If $v_F^{k+1}=0,$ the claim follows directly. In the
opposite case it is obvious that $(u_1+\cdots+u_{i-1}) Q_k^F (u_1,
\dots, u_{i-1}, v_F)$ is the degree $k+1$ homogenous polynomial in
variables $u_1$, $\dots$, $u_{i-1}$, $v_F$. However, from
relations (\ref{r1}) follows $(u_1+\cdots+u_{i-1}) Q_k^F (u_1,
\dots, u_{i-1}, v_F)=Q_{k+1}^F (u_1, \dots, u_{i-1}, v_F)$, where
all monomials of the polynomial $Q_{k+1}^{F}$ are square free.
\end{proof}

From lemma \ref{lema1} immediately follows

\begin{corollary}\label{posl1}
Each class  $v_{F_{i_1}}^{r_1}\cdots v_{F_{i_k}}^{r_k}$ is either
trivial or equal to the degree $r_1+\dots+r_k$ homogenous
polynomial in variables $u_1$, $\dots$, $u_{n}$, $v_{F_{i_1}}$,
$\dots$,  $v_{F_{i_k}}$ whose monomials are square free.
\end{corollary}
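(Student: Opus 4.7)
The plan is to lift Lemma \ref{lema1} from a single factor to a product by expanding and then reducing the $u$-part to a square-free normal form via the quadratic relations from Proposition \ref{p2}.

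First I would apply Lemma \ref{lema1} separately to each factor $v_{F_{i_j}}^{r_j}$. If any one of them is already trivial, the whole product vanishes and the conclusion is immediate; otherwise every $v_{F_{i_j}}^{r_j}$ equals a homogeneous square-free polynomial $Q_{r_j}^{F_{i_j}}$ of degree $r_j$ in the variables $u_1,\ldots,u_{h(F_{i_j})-1}, v_{F_{i_j}}$. A small additional observation that I would extract from the inductive construction inside the proof of Lemma \ref{lema1}: every monomial of $Q_{r_j}^{F_{i_j}}$ contains $v_{F_{i_j}}$, and (by square-freeness) contains it to degree exactly one. The base case is $v_{F_{i_j}}$ itself, and the inductive step only multiplies by $u_1+\cdots+u_{h(F_{i_j})-1}$ before being rewritten, which cannot drop the $v_{F_{i_j}}$ factor.

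Multiplying the $Q_{r_j}^{F_{i_j}}$ together is then straightforward. If two of the facets $F_{i_j}$ share a color, Proposition \ref{p1} annihilates the product through $v_{F_{i_j}} v_{F_{i_{j'}}}=0$. Otherwise, after expansion every monomial has $v$-part equal to $v_{F_{i_1}}\cdots v_{F_{i_k}}$ with each variable occurring to degree one, and a $u$-part which is a (possibly non-square-free) monomial in $u_1,\ldots,u_n$ of complementary degree.

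The remaining step is to reduce the $u$-part to a sum of square-free monomials while preserving total degree. The rewrite rule supplied by (\ref{r1}), namely $u_j^2 \mapsto u_1 u_j+\cdots+u_{j-1}u_j$, is degree-preserving and strictly lowers the lexicographic exponent vector read from the highest index downward, so iterated application terminates and produces a polynomial in which each $u_i$ appears to degree at most one in every monomial. The relations $u_{h(F_{i_j})}v_{F_{i_j}}=0$ from Proposition \ref{p1} may further simplify or cancel some surviving monomials but cannot spoil the square-free conclusion. The main obstacle in a careful write-up is really just this termination bookkeeping for the $u$-reduction; everything else follows directly from the three previous results. Combining the three steps, the product is either zero or a homogeneous square-free polynomial of degree $r_1+\cdots+r_k$ in $u_1,\ldots,u_n,v_{F_{i_1}},\ldots,v_{F_{i_k}}$, as claimed.
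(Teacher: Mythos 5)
Your argument is correct and is exactly what the paper has in mind: the paper states this corollary with no proof beyond the words ``immediately follows'' from Lemma~\ref{lema1}, and you have supplied the routine but necessary details (expanding each factor by Lemma~\ref{lema1}, killing repeated $v$-variables via Proposition~\ref{p1}, and reducing the leftover $u$-squares by the degree-preserving, terminating rewrite from Proposition~\ref{p2}). No gap; this is the intended route, just written out.
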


\begin{proposition}\label{p3} The class $u_1 \dots u_n$ is the fundamental
cohomology class in the ring $H^{dn} (M^{dn}; \mathbb{Z}_2)$.
\end{proposition}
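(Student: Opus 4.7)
The plan is to realize $u_1\cdots u_n$ geometrically as the mod-$2$ Poincar\'e dual of a single point. First, I would observe that by the construction of Section~\ref{cons} the intersection $V=H_1\cap\cdots\cap H_n$ is a vertex of $P^n$, and the matrix $\Lambda_{(V)}$ whose columns are $\lambda_{H_1},\ldots,\lambda_{H_n}$ is upper triangular with $1$'s on the diagonal, so $\det\Lambda_{(V)}=1$ in both $R_1=\mathbb{Z}_2$ and $R_2=\mathbb{Z}$. Hence $\lambda_{H_1},\ldots,\lambda_{H_n}$ generate $R_d^n$, the isotropy subgroup $G(V)=G_{\lambda_{H_1}}\times\cdots\times G_{\lambda_{H_n}}$ equals the whole group $G_d^n$, and the quotient construction $M^{dn}=G_d^n\times P^n/{\sim}$ forces $\pi^{-1}(V)$ to consist of a single point.

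Next I would invoke the standard local picture around this orbit, as reviewed in the introduction: a $G_d^n$-invariant neighborhood of $\pi^{-1}(V)$ in $M^{dn}$ is weakly equivariantly diffeomorphic to $\mathbb{K}_d^n$ with the standard $G_d^n$-action, the identification being arranged so that the characteristic submanifolds $M^{d(n-1)}_{H_i}=\pi^{-1}(H_i)$ correspond to the coordinate hyperplanes $\{x_i=0\}$. These hyperplanes are mutually transverse and meet only at the origin, so the submanifolds $M^{d(n-1)}_{H_1},\ldots,M^{d(n-1)}_{H_n}$ are transverse in $M^{dn}$ and satisfy $M^{d(n-1)}_{H_1}\cap\cdots\cap M^{d(n-1)}_{H_n}=\pi^{-1}(V)$.

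Finally, since by definition each $u_i=v_{H_i}$ is (the mod-$2$ reduction of) the Poincar\'e dual of $M^{d(n-1)}_{H_i}$, multiplicativity of Poincar\'e duality under transverse intersection yields
\[
u_1\cdots u_n \;=\; D\bigl[M^{d(n-1)}_{H_1}\cap\cdots\cap M^{d(n-1)}_{H_n}\bigr] \;=\; D[\pi^{-1}(V)],
\]
which is the class dual to a single point, hence the generator of $H^{dn}(M^{dn};\mathbb{Z}_2)\cong\mathbb{Z}_2$. I expect the only mildly delicate step to be the transversality assertion at the orbit $\pi^{-1}(V)$; but this is immediate from the local model above, and in the quasitoric case $d=2$ one simply notes that passing to mod-$2$ coefficients is compatible with the cup product, so the identification of $u_1\cdots u_n$ with the fundamental class persists after reduction.
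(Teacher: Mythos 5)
Your proposal is correct, but it takes a genuinely different route from the paper. The paper's argument is purely algebraic: it stays inside the ring presentation of $H^*(M^{dn};\mathbb{Z}_2)$, uses Propositions~\ref{p1} and~\ref{p2} to show that the monomial $V^{\ast}=\prod_{F\in\mathcal{F}_V}v_F$ is the same class for every vertex $V$ (by a walk along edges of $P^n$, multiplying the linear relations by suitable square-free monomials), and then argues by contradiction: if $u_1\cdots u_n=0$ then all vertex monomials vanish, hence by Corollary~\ref{posl1} the whole top-degree group would vanish, contradicting $H^{dn}(M^{dn};\mathbb{Z}_2)\cong\mathbb{Z}_2$. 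Your argument is geometric: you identify the fixed point $\pi^{-1}(V)$ over the distinguished vertex $V=H_1\cap\cdots\cap H_n$, observe that the characteristic submanifolds over $H_1,\dots,H_n$ meet transversally in precisely that point (transversality coming from the locally standard chart, and $\det\Lambda_{(V)}=1$), and invoke multiplicativity of mod-$2$ Poincar\'e duality under transverse intersection. Both are valid; yours is shorter and gives the cleaner insight that $u_1\cdots u_n$ is literally the dual of a fixed point, while the paper's argument is self-contained within the algebra already set up (and in fact establishes along the way the slightly stronger fact, used nowhere essential here, that every vertex monomial $V^{\ast}$ represents the same class). The only point you should make explicit is that the characteristic submanifolds intersect transversally not just at $\pi^{-1}(V)$ but wherever any two of them meet, so that the duality formula $D[A\cap B]=D[A]\smile D[B]$ can be iterated; this is standard for quasitoric manifolds and small covers and again follows from local standardness, but as written you only check the local model at $\pi^{-1}(V)$.
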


\begin{proof} Denote by $\mathcal{F}_V$ the set of facets containing a vertex $V\in
P^{n}$. Let $V^{\ast}=\prod_{F\in\mathcal{F}_V}v_F$. We show that
$V^{\ast}=V'^{\ast}$ for each two vertices $V,V'\in P^{n}$. For
this it is sufficient to suppose that vertices are connected by
the edge $VV'$. In this case we have
$\mathcal{F}_{V'}=\mathcal{F}_V\setminus\{G\}\cup\{G'\}$ for
unique facets $G$ and $G'$ which are colored by the same color. If
$h(G)=h(G')=i$, we multiply by
$\prod_{F\in\mathcal{F}_V\cap\mathcal{F}_{V'}}v_F$ the relation
$$\sum_{j=1}^{i-1} u_j + \sum_{F\in \mathcal{F}_i} v_{F}=0.$$
From Proposition \ref{p1} we obtain
$$(u_1+\ldots+u_{i-1})\prod_{F\in\mathcal{F}_V\cap\mathcal{F}_{V'}}v_F+V^{\ast}+V'^{\ast}=0.$$
In the case $i=1$ we get the required equality
$V^{\ast}=V'^{\ast}$. If $i>1$, for each $1\leq j<i$, we have by
Propositions \ref{p1} and \ref{p2}
$$u_j\prod_{F\in\mathcal{F}_V\cap\mathcal{F}_{V'}}v_F=\left\{\begin{array}{cc} 0,
&\mbox{if} \ H_j\notin\mathcal{F}_V\cap\mathcal{F}_{V'} \\
(u_1+\cdots
u_{j-1})\prod_{F\in\mathcal{F}_V\cap\mathcal{F}_{V'}}v_F, &
\mbox{if} \
H_j\in\mathcal{F}_V\cap\mathcal{F}_{V'}\end{array}\right..$$ Then
by induction follows
$u_j\prod_{F\in\mathcal{F}_V\cap\mathcal{F}_{V'}}v_F=0,
j=1,\ldots,i-1$ which again gives $V^{\ast}=V'^{\ast}$.

Suppose now that $u_1\cdots u_n=0$. From the so far proven part
follows that $V^{\ast}=0$ for each vertex $V\in P^{n}$. Moreover
in this case all $dn$-dimensional classes vanish since by
Corollary \ref{posl1} square free monomials of degree $n$ linearly
generate $H^{dn} (M^{dn}; \mathbb{Z}_2)$. But it contradicts the
known fact that $H^{dn} (M^{dn}; \mathbb{Z}_2)\simeq\mathbb{Z}_2$.
\end{proof}

\subsection{Stiefel-Whitney classes}

Proposition \ref{p1} implies that the total Stiefel-Whitney class
could be expressed as
\[w(M^{dn})=\prod_{i=1}^{n} \prod_{F\in \mathcal{F}_i} (1+v_F)=\prod_{i=1}^{n}(1+\sum_{F\in \mathcal{F}_i}v_F).\]
By applying (\ref{j1}) we obtain

\[w (M^{dn})=(1+u_1)(1+u_1+u_2)\cdots(1+u_1+u_2+\cdots+u_{n-1}).\]

In order to prove the main theorem \ref{main}, we are going to use
another set of generators $t_1,\dots,t_n$ which are defined by
\begin{eqnarray*}\label{tgen}
t_i=\sum_{j=1}^{i}u_j, i=1,\ldots,n.
\end{eqnarray*}
Consequently we have
\begin{equation}\label{sw3} w(M^{dn})=(1+t_1)\cdots(1+t_{n-1}).
\end{equation}
By Proposition \ref{p2} the classes $t_1^2$, $t_2^2+t_1
t_2,\ldots,t_n^2+t_{n-1}t_n$ vanish. Let $\mathcal{T}_n$ be the
ideal generated by these classes. By Proposition \ref{p3} it is
easily seen that the class $t_1 t_2 \cdots t_n$ also represents
the fundamental class.

\section{Proof of Theorem \ref{main}}

In order to prove Theorem \ref{main} it is sufficient to show that
the top dual Stiefel-Whitney class $\overline{w}_{d(n-1)}(M^{dn})$
is nontrivial.

\subsection{Dual Stiefel-Whitney classes}

Stiefel-Whitney classes and dual Stiefel-Whitney classes are
related by
$$w(M^{dn})\cdot \overline{w}(M^{dn})=1.$$
From the relation (\ref{sw3}) we obtain

\begin{lemma}\label{swd}
The total Stiefel--Whitney class $\overline{w}(M^{dn})$ is
expressed by
$$\overline{w}
(M^{dn})=(1+t_1)(1+t_2+t_2^2)\cdots(1+t_{n-1}+\cdots+t_{n-1}^{n-1}).$$
\end{lemma}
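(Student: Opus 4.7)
The plan is to verify the defining identity $w(M^{dn}) \cdot \overline{w}(M^{dn}) = 1$ directly for the right-hand side proposed in the statement. Using (\ref{sw3}), the $i$-th factor $(1+t_i)$ of $w(M^{dn})$ pairs with the $i$-th factor $(1+t_i+\cdots+t_i^i)$ of the conjectured $\overline{w}(M^{dn})$, and working modulo $2$ each such pair telescopes to
\[
(1+t_i)\bigl(1 + t_i + t_i^2 + \cdots + t_i^i\bigr) = 1 + t_i^{i+1}.
\]
Hence it suffices to show that $t_i^{i+1} = 0$ in $H^{\ast}(M^{dn};\mathbb{Z}_2)$ for each $i = 1,\ldots,n-1$, so that $\prod_{i=1}^{n-1}(1+t_i^{i+1}) = 1$.

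For this nilpotency step, I would work entirely with the relations
\[
t_1^2 = 0, \qquad t_i^2 = t_{i-1}\, t_i \quad (i \geq 2),
\]
already recorded in the excerpt after (\ref{sw3}) as a direct consequence of Proposition \ref{p2}. The central claim, to be proved by induction on $k$, is the closed form
\[
t_i^{k} \;=\; t_{i-k+1}\, t_{i-k+2}\, \cdots \, t_i, \qquad 1 \le k \le i+1,
\]
with the convention $t_0 = 0$. In the inductive step one multiplies the right-hand side by an extra $t_i$, creating a factor $t_i^2$ which the relation rewrites as $t_{i-1} t_i$; the newly formed $t_{i-1}^2$ then becomes $t_{i-2} t_{i-1}$, and so on, so that the square propagates one step to the left through the product. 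At $k = i+1$ the leftmost factor becomes $t_0 = 0$, yielding $t_i^{i+1} = 0$ as required.

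The only mildly delicate point is the bookkeeping of this leftward cascade: one must check that each successive application of $t_j^2 = t_{j-1} t_j$ produces exactly one new square one position further to the left, so that after the right number of steps the square reaches $t_0$. Once this is established, the telescoping of the first paragraph collapses $w(M^{dn}) \cdot \overline{w}(M^{dn})$ to $1$, and the stated formula for $\overline{w}(M^{dn})$ follows immediately.
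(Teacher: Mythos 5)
Your proposal is correct, and like the paper it reduces the lemma to the nilpotency statement $t_k^{k+1}=0$ for $k=1,\ldots,n-1$ via the telescoping $(1+t_k)(1+t_k+\cdots+t_k^{k})=1+t_k^{k+1}$. Where you diverge is in how that nilpotency is established. The paper expands $t_k^{k+1}=(u_1+\cdots+u_k)^{k+1}$ and invokes Corollary \ref{posl1} together with Proposition \ref{p2} to conclude that the result is a sum of square-free homogeneous polynomials of degree $k+1$ in the $k$ variables $u_1,\ldots,u_k$; since degree exceeds the number of variables, each such polynomial vanishes. You instead stay entirely in the $t$-presentation, using only $t_1^2=0$ and $t_i^2=t_{i-1}t_i$ to derive by induction the explicit closed form $t_i^k=t_{i-k+1}t_{i-k+2}\cdots t_i$ (with $t_0=0$), so that $t_i^{i+1}$ acquires the vanishing factor $t_0$. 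Your cascade argument is a clean, self-contained computation whose bookkeeping checks out: in passing from $t_i^k$ to $t_i^{k+1}$ one applies $t_j^2=t_{j-1}t_j$ successively for $j=i,i-1,\ldots,i-k+1$, moving the unique square one step left each time, and the needed relation at $j=1$ is precisely $t_1^2=t_0t_1=0$. Your route avoids appealing to Lemma \ref{lema1}/Corollary \ref{posl1} and the ``square-free in too few variables'' count, at the cost of the explicit inductive verification; the paper's route is less computational but leans on the earlier structural lemmas. Both are valid, and yours additionally yields the closed form for every power $t_i^k$, not just the vanishing of $t_i^{i+1}$.
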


\begin{proof}
The statement follows from the fact that $t_k^{k+1}=0$ for all
$k=1,\ldots,n-1$. In fact $t_k^{k+1}=(u_1+\cdots+u_k)^{k+1}$ is
the sum of monomials of the form $\sum u_{i_1}^{r_1}\cdots
u_{i_j}^{r_j},$ where $1\leq i_1<\cdots<i_j\leq k$ and $r_1+\cdots
r_k=k+1$. In its turn, by Corollary \ref{posl1} and Proposition
\ref{p2} this is a sum of homogeneous polnomials of the degree
$k+1$ of the form $\sum Q_{k+1}(u_1,\ldots,u_k)$. Since monomials
of each polynomial in the sum is square free, we have
$Q_{k+1}(u_1,\ldots,u_k)=0$.
\end{proof}

We want to determine the highest nontrivial dual $\overline{w}_k
(M^{dn})$. For small $n$, we could calculate $\overline{w}
(M^{dn})$ directly.

\begin{example}\begin{itemize}
\item[(1)] $\overline{w} (M^{d \cdot 2})=1+t_1$, \item[(2)]
$\overline{w} (M^{d\cdot 3})=1+(t_1+t_2)$, \item[(3)]
$\overline{w} (M^{d\cdot 4})=1+(t_1+t_2+t_3)+t_1 t_3+t_1 t_2 t_3$,
\item[(4)] $\overline{w} (M^{d\cdot 5})=1+(t_1+t_2+t_3+t_4)+(t_1
t_3+t_1 t_4+t_2 t_4)+(t_1 t_2 t_3+t_2 t_3 t_4)$.
\end{itemize}
\end{example}

\subsection{The class $\overline{w}_{d (n-1)}(M^{dn})$}

Consider arbitrary two manifolds $M^{dn}$ and $M^{d(n+1)}$
constructed as in subsection \ref{cons} over simple polytopes
$P_1^{n}$ and $P_2^{n+1}$, properly colored in $n$ and $n+1$
colors, respectively.

\begin{lemma}\label{sub} The ring $\mathbb{Z}_2[t_1,\ldots,t_n]/\mathcal{T}_n$
is a subring of the cohomology ring $H^{\ast}(M^{dn};
\mathbb{Z}_2)$ which is generated by elements $t_1,\ldots,t_n$.
\end{lemma}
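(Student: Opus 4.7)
The plan is to establish that the natural ring homomorphism
\[
\phi : R_n := \mathbb{Z}_2[t_1,\ldots,t_n]/\mathcal{T}_n \longrightarrow H^*(M^{dn}; \mathbb{Z}_2), \qquad t_i \mapsto t_i,
\]
which is well-defined by Proposition \ref{p2}, is injective. By construction its image is the subring of $H^*(M^{dn}; \mathbb{Z}_2)$ generated by $t_1,\ldots,t_n$, so injectivity is all that remains.

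The crux of the argument is to equip $R_n$ itself with a Poincar\'e duality structure, which I would do by realizing it as the cohomology ring of a specific closed manifold. Apply the construction of Subsection \ref{cons} to the cube $I^n$ with its standard $n$-coloring (opposite facets share a color), taking as the distinguished vertex the corner where $H_1,\ldots,H_n$ meet. Each color class $\mathcal{F}_i$ then consists of just the two opposite facets $\{H_i, G_i\}$, so the cohomology of the resulting closed manifold $N^{dn}$ has only the generators $u_i = v_{H_i}$ and $v_{G_i}$. The linear relation $\theta_i = 0$ forces $v_{G_i} = t_i$, and the Stanley--Reisner relation $u_i v_{G_i} = 0$ becomes $(t_i + t_{i-1})t_i = 0$, i.e.\ $t_i^2 = t_{i-1}t_i$. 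This identifies $H^*(N^{dn}; \mathbb{Z}_2) \cong R_n$ as graded rings, and since $N^{dn}$ is a closed $dn$-dimensional manifold, $\mathbb{Z}_2$-Poincar\'e duality translates into a non-degenerate pairing $R_n^k \otimes R_n^{n-k} \to R_n^n = \mathbb{Z}_2\langle t_1 t_2 \cdots t_n \rangle$.

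Injectivity of $\phi$ now follows quickly: given any nonzero $x \in R_n$, Poincar\'e duality in $R_n$ supplies $y \in R_n$ with $xy = t_1 t_2 \cdots t_n$, and applying $\phi$ yields
\[
\phi(x)\phi(y) \ =\ \phi(t_1 \cdots t_n)\ =\ t_1 \cdots t_n\ \in\ H^*(M^{dn}; \mathbb{Z}_2),
\]
which is nonzero by the observation made just after Proposition \ref{p3} that $t_1 t_2 \cdots t_n$ represents the fundamental cohomology class of $M^{dn}$. Hence $\phi(x) \neq 0$.

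The only substantial step is the identification $H^*(N^{dn}; \mathbb{Z}_2) \cong R_n$ for the cube-based manifold; this is a direct calculation from the explicit form of the characteristic map in Subsection \ref{cons}. The rest is a standard Poincar\'e-duality-versus-top-class manoeuvre, so no further obstacles are anticipated.
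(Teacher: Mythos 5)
Your proof is correct, and it takes a genuinely different route from the paper. The paper proves that $\mathcal{U}_n$ (equivalently $\mathcal{T}_n$) is the full ideal of relations by a direct combinatorial argument: it assumes a nontrivial linear relation among square-free monomials in the $u_i$, passes to the lexicographically largest index set $I_m=\{a_1<\dots<a_k\}$, and multiplies by a product of the missing $u_j$ to produce $u_1\cdots u_{a_k}=0$, contradicting Proposition~\ref{p3}. You instead import Poincar\'e duality: you identify $R_n=\mathbb{Z}_2[t_1,\dots,t_n]/\mathcal{T}_n$ with the cohomology ring of the cube-based manifold $N^{dn}$ from Subsection~\ref{cons}, which gives $R_n$ a non-degenerate intersection pairing, and then deduce injectivity of $\phi$ from the nonvanishing of $\phi(t_1\cdots t_n)=t_1\cdots t_n$ in $H^{dn}(M^{dn};\mathbb{Z}_2)$ established after Proposition~\ref{p3}. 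The cube computation (only the two opposite facets per color; $v_{G_i}=t_i$ from $\theta_i=0$; Stanley--Reisner relation $u_iv_{G_i}=0$ becomes $t_i^2+t_{i-1}t_i=0$; these relations generate $\mathcal{T}_n$) is correct. Your approach is conceptually clean and transparently exhibits the source of the duality; the paper's argument is more elementary and self-contained, avoiding reference to a second manifold. Both rest on the same essential fact that $t_1\cdots t_n$ survives as the fundamental class.
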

\begin{proof} It is necessary to prove that $\mathcal{T}_n$
is the ideal of all relations among elements $t_1,\ldots,t_n$ in
$H^{\ast}(M^{dn}; \mathbb{Z}_2)$. The rings $\mathbb{Z}_2 [t_1,
\dots, t_n]/\mathcal{T}_n$ and $\mathbb{Z}_2 [u_1, \dots,
u_n]/\mathcal{U}_n$ are isomorphic, where $\mathcal{U}_n$ is the
ideal generated by elements $u_1^{2}$ and
$u_i^{2}+(u_1+\ldots+u_{i-1})u_i, i=2,\ldots,n$ from Proposition
\ref{p2}. It is sufficient to show that $\mathcal{U}_n$ is the
ideal of all relations among $u_1,\ldots,u_n$ in $H^{\ast}(M^{dn};
\mathbb{Z}_2)$.

By Corollary \ref{posl1} any homogeneous polynomial in
$H^{\ast}(M^{dn}; \mathbb{Z}_2)$ is expressed as a sum of square
free monomials. So let we have a relation
$$ \sum_{j=1}^{m}u_{I_j}=0,$$ where
$I_j=\{a_{j1}<\ldots<a_{jk}\}\subset\{1,\ldots,n\}, j=1,\ldots,m$
and $u_{I}=\prod_{i\in I}u_i$. It is easy to convince that
$u_1\cdots u_{i-1}u_i^{2}=0, i=1,\ldots,n$. Let us order sets
lexicographically $I_1<\ldots<I_m$. Let $I_m=\{a_1<\ldots<a_k\}$
and $d_i=a_i-a_{i-1}, i=1,\ldots,k$ where $a_0=0$. Define
following elements
$$U_i=\left\{\begin{array}{cc} u_{a_{i-1}+1}\cdots u_{a_i-1}, & d_i>1
\\ 1, & d_i=1\end{array}\right.,$$ where $i=1,\ldots,k$. By multiplying
the relation with $U_1\cdots U_k,$ we get $u_1\cdots u_{a_k}=0,$
which contradicts the fact that
 $u_1\cdots u_n$ is the fundamental class.
\end{proof}

By abbreviating all terms in the identity in Lemma \ref{swd} we
obtain that each class $\overline{w}_{dk}(M^{dn})$ is expressed in
the ring $\mathbb{Z}_2[t_1,\dots,t_n]/\mathcal{T}_n$ by square
free homogeneous polynomials $\overline{W}_{k}(t_1,\dots,t_n)$ of
the degree $k$. Note that the inclusion
$$i:\mathbb{Z}_2[t_1, \dots, t_n]\rightarrow\mathbb{Z}_2[t_1,
\dots, t_n, t_{n+1}]$$ induces a natural monomorphism

\[i^*:\mathbb{Z}_2[t_1, \dots, t_n]/\mathcal{T}_n\rightarrow
\mathbb{Z}_2[t_1, \dots, t_n, t_{n+1}]/\mathcal{T}_{n+1},\] which
by Lemma \ref{sub} allows us to consider the total Stiefel-Whitney
class $\overline{w}(M^{dn})$ as an element of the ring
$H^*(M^{d(n+1)}; \mathbb{Z}_2)$. Thw total Stiefel-Whitney classes
$\overline{w} (M^{dn})$ and $\overline{w} (M^{d(n+1)})$ satisfy
the following relation in $H^*(M^{d(n+1)}; \mathbb{Z}_2)$
$$\overline{w} (M^{d(n+1)})=\overline{w}(M^{dn})(1+t_n+\dots+t_n^n).$$
Explicitly
\begin{equation}\label{jed2}
\overline{w}_{dk}(M^{d(n+1)})=\overline{w}_{dk}(M^{dn})+t_n\overline{w}_{d(k-1)}(M^{dn})+\dots+t_n^k,
k=0,\ldots, n.
\end{equation}
Recall that $\overline{w}_{dn}(M^{dn})=0$ (see \cite{10}), which
implies
\begin{equation}\label{jed3}
\overline{w}_{dn}(M^{d(n+1)})= t_n
\overline{w}_{d(n-1)}(M^{dn})+\cdots+t_n^n=t_n\overline{w}_{d(n-1)}(M^{d(n+1)}).
\end{equation}

We use the same trick as in \cite{1}. Define numbers $\sigma^k_n,
0\leq k\leq n-1$ as follows
\[
\sigma^k_n=\overline{W}_{dk}(\underbrace{1,\dots,1}_n)\pmod 2.
\]
By (\ref{jed2}) and (\ref{jed3}), we have
$\sigma^k_{n+1}=\sum_{i=0}^{k} \sigma^i_n $ for every
$k=1,\ldots,n-1$ and $\sigma^{n}_{n+1}=\sigma^{n-1}_{n+1}$. By
definition of $\sigma^k_n$, if $\sigma^k_n=1$, then
$\overline{w}_{dk}$ is the sum of an odd number of linearly
independent square free monomials, which implies
$\overline{w}_{dk}(M^{dn})\neq 0. $

An easy mathematical induction shows that
 $$ \sigma^k_n\equiv\bn{n+k}{k}\pmod{2}. $$
Particularly, if $n=2^r$, we have
\[
\sigma^{n-1}_n
\equiv\bn{2^r+(2^r-1)}{2^r-1}\equiv\bn{2^{r+1}-1}{2^r-1}\equiv
1\pmod{2}.
\]
Consequently,
\[
\overline{w}_{d(n-1)}(M^{dn})=t_1 t_2\cdots t_{n-1}\neq 0.
\]
Therefore by Theorem \ref{imeem} we obtain the required bounds
\[imm(M^{dn})\geq d(2n-1), em(M^{dn})\geq d(2n-1)+1.\] For the
small cover $M^{n}$, Whitney's theorem implies $imm(M^{n})=2n-1$
and $em(M^{n})=2n$. The quasitoric manifold $M^{2n}$ is
orientable, so it can be embedded into $\mathbb{R}^{4n-1}$. From
Lemma \ref{swd} follows
$\overline{w}_2(M^{2n})=t_1+t_2+\cdots+t_{n-1},$ which implies
that the characteristic class $\overline{w}_2(M^{dn})\cdot
\overline{w}_{2n-2}(M^{2n})$ vanishes. If $n\geq 3$, the result of
Massey \cite[Theorem V]{11} yields
 $$imm(M^{2n})=4n-2 ,$$ which finishes the proof of Theorem \ref{main}.

\section{Proof of Theorem \ref{exist}}

Let $n=2^{r_1}+2^{r_2}+\dots+2^{r_t}$, $r_1>r_2>\dots>r_t\geq 0$
be the binary representation of $n$ and let $m_i=2^{r_i}$ for
$i=1,\dots,t$ and $m_0=0$. Let $P^n$ be a simple $n$-polytope such
that $$P^n= P_1^{2_{r_1}}\times \cdots\times P_t^{2^{r_t}},
$$ where each $P_i^{2_{r_i}}$ is $2^{r_i}$-colored simple
$2^{r_i}$-polytope. It is obvious that polytope $P^n$ is
$n$-colored.

In subsection \ref{cons} we constructed manifolds $M^{d 2^{r_i}}$
over polytopes $P_i^{2_{r_i}}$. It follows from
\cite[Proposition~4.7]{2} that
$M^{dn}=M^{d2^{r_1}}\times\dots\times M^{d 2^{r_t}}$ is a
$G_d^{n}$-manifold over the polytope $P^n= P_1^{2_{r_1}}\times
\cdots\times P_t^{2^{r_t}}$. The total Stiefel--Whitney class of
the manifold $M^{dn}$ can be easily determined using the following
formula (see \cite[pp.\,27,\,54]{4})
\[
w(M^{dn})=w(M^{d 2^{r_1}})\cdots w(M^{d 2^{r_t}})\in
H^*(M^{dn})\cong H^*(M^{d 2^{r_1}})\otimes\dots\otimes H^*(M^{d
2^{r_t}}).
\]
The corresponding dual total Stiefel-Whitney class is expressed as
\begin{equation}\label{dpr} \overline{w}(M^{dn})=\overline{w}(M^{d 2^{r_1}})\cdots
\overline{w}(M^{d 2^{r_t}}).\end{equation} Let $\rank w M):=\max
\{k | w_k (M)\neq 0\}$. Thus, from formula (\ref{dpr}) we have

$$\mathrm{rank} \ \overline{w}(M^{dn})=\sum_{i=1}^t\mathrm{rank} \ \overline{w}(M^{d2^{r_i}})=\sum_{i=1}^t d(2^{r_i}-1)=nd -\alpha (n) d.$$
In this way, Theorem \ref{exist} is a consequence of Theorems
\ref{imeem} and \ref{skewteo}.

\section*{Aknowledgements}

The authors express their special thanks to T. E. Panov and A. A.
Gaifullin for attention to this work, as well as to reviewers for
helpful comments.

\end{document}